\title{A necessary condition for certain functions to preserve positive semi-definiteness on partitioned matrices}
\author{Lutz Klotz \and Conrad M\"adler}
\begin{document}
\maketitle

\begin{abstract}
 If \(f\) is a symmetric complex-valued function on the \(m\)\nobreakdash-fold Cartesian product of the set of \tnn{} reals and \(A\) is a \tpsd{} \tmma{matrix} with eigenvalues \(\eigj\), we set \(f(A)\defeq f(\eige,\dotsc,\eigm)\). It is shown that if \(\mat{f(\Aab)}\) is \tpsd{} whenever \(\mat{\Aab}\) is a \tpsd{} matrix with \tpsd{} entries \(\Aab\), then \(f\) has a power series expansion with positive coefficients.
\end{abstract}

\begin{description}
 \item[Keywords] Positive semi-definite matrix, symmetric function, matrix function
 \item[AMS Classification] 15A15, 15B57
\end{description}

 The symbols \symba{\N}{n}, \symba{\Zp}{z}, \symba{\R}{r}, \symba{\Rp}{r}, and \symba{\C}{c} stand for the set of positive integers, \tnn{} integers, real, \tnn{} real and complex numbers, resp. Let \(m,n\in\N\). If \(S\) is a subset of the algebra of all \tmma{matrices} with complex entries, let \symba{\csme{n}{S}}{m} denote the set of all \tnna{matrices} with entries from \(S\) and \symba{\psde{n}{S}}{m} the set of all \tpsd{} matrices of \(\csme{n}{S}\). If \(S=\csm{1}=\C\), we shall write \symba{\csme{n}{S}\defeq\csmn}{m} and \symba{\psde{n}{S}\defeq\psdn}{m}. For a matrix \(A\in\csmm\), let \symba{\eigj}{l}, \(j\in\mn{1}{m}\), denote the eigenvalues of \(A\) counted according to their algebraic multiplicity and \symba{\det A}{d} its determinant. The vectors of \(\Cm\) are written as row vectors. If \(\mat{z_1,\dotsc,z_m}\in\Cm\), denote by \symba{\diag(z_1,\dotsc,z_m)}{d} the diagonal matrix with elements \(z_1,\dotsc,z_m\) on its principal diagonal. The symbol \symba{\Iu{m}}{i} stands for the unit matrix of \(\csmm\)
 
 If \(f\colon\Cm\to\C\) is a symmetric function, \ie{}\ if \(f(z_1,\dotsc,z_m)=f(z_{\pi(1)},\dotsc,z_{\pi(m)})\) for all \(\mat{z_1,\dotsc,z_m}\in\Cm\) and all permutations \(\pi\) of the set \(\mn{1}{m}\), we define \symb{f(A)\defeq f(\eige,\dotsc,\eigm)}, \(A\in\csmm\), according to the paper~\zita{MR0242855}. The function \(f\) is called \noti{to preserve \tpsd{ness} on \(\psde{n}{S}\)} if \(\mat{f(\Aab)}\in\psdn\) whenever \(\mat{\Aab}\in\psde{n}{S}\). It seems to be a difficult problem to characterize all functions preserving \tpsd{ness} on \(\psdb{n}{m}\) for given \(n>2\). If \(f\) is a polynomial, it was solved in~\zitaa{BGKP}{\cthm{1.1}}. On the other hand, it is not hard to describe all functions preserving \tpsd{ness} on \(\psdb{2}{m}\), \cf{}~\zitaa{MR3536943}{\cprop{1} and \cthm{2}}. As its consequence we can state the following result.
 
\begin{lemma}
 Let \(f\colon\Rpm\to\C\) be a symmetric function which preserves \tpsd{ness} on \(\psddb{2}{m}\). Then \(f\) is \tnn{}, increases with respect to each variable and is continuous.
\end{lemma}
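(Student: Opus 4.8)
The plan is to read off all three properties from the single constraint that preserving \tpsd{ness} on \(\psddb{2}{m}\) imposes: whenever \(\left(\begin{smallmatrix}A&B\\B^{*}&C\end{smallmatrix}\right)\in\psddb{2}{m}\), the image \(\left(\begin{smallmatrix}f(A)&f(B)\\f(B^{*})&f(C)\end{smallmatrix}\right)\) must belong to \(\psdn\) with \(n=2\); hence \(f(A),f(C)\) are \tnn{} reals and \(\lvert f(B)\rvert^{2}\le f(A)f(C)\). I would evaluate this only on commuting diagonal blocks, where membership in \(\psddb{2}{m}\) and the action of \(f\) are explicit. For nonnegativity, given \((a_{1},\dotsc,a_{m})\in\Rpm\) put \(A\defeq\diag(a_{1},\dotsc,a_{m})\) and take \(A=B=C\); then \(\left(\begin{smallmatrix}A&A\\A&A\end{smallmatrix}\right)\) is \tpsd{} and its \(f\)\nobreakdash-image is the Hermitian matrix all of whose entries equal \(f(a_{1},\dotsc,a_{m})\), which is \tpsd{} only if that common entry is a \tnn{} real. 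For monotonicity it suffices, by symmetry, to vary the last argument: for \(0\le s\le t\) set \(A\defeq\diag(a_{1},\dotsc,a_{m-1},s)\), \(C\defeq\diag(a_{1},\dotsc,a_{m-1},t)\) and \(B\defeq A\). Since \(\left(\begin{smallmatrix}A&A\\A&C\end{smallmatrix}\right)=\left(\begin{smallmatrix}A&A\\A&A\end{smallmatrix}\right)+\left(\begin{smallmatrix}0&0\\0&C-A\end{smallmatrix}\right)\) is a sum of \tpsd{} matrices, the two-point constraint gives \(f(A)f(C)\ge f(A)^{2}\), and together with nonnegativity this yields \(f(A)\le f(C)\), i.e.\ \(f(a_{1},\dotsc,a_{m-1},s)\le f(a_{1},\dotsc,a_{m-1},t)\), in all cases.

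For continuity I would first turn the constraint into a multiplicative inequality. Taking \(A\defeq\diag(a_{1},\dotsc,a_{m})\), \(C\defeq\diag(c_{1},\dotsc,c_{m})\) and \(B\defeq\diag(\sqrt{a_{1}c_{1}},\dotsc,\sqrt{a_{m}c_{m}})\) yields a \tpsd{} block matrix, since each paired block \(\left(\begin{smallmatrix}a_{i}&\sqrt{a_{i}c_{i}}\\\sqrt{a_{i}c_{i}}&c_{i}\end{smallmatrix}\right)\) is singular and \tpsd{}, so
\[
 f\bigl(\sqrt{a_{1}c_{1}},\dotsc,\sqrt{a_{m}c_{m}}\bigr)^{2}\le f(a_{1},\dotsc,a_{m})\,f(c_{1},\dotsc,c_{m}).
\]
This estimate first shows that \(f\) is all\nobreakdash-or\nobreakdash-nothing on the open orthant: if \(f\) vanished at some \((x_{1},\dotsc,x_{m})\in(0,\infty)^{m}\), then taking \(a_{i}=x_{i}\) and \(c_{i}=y_{i}^{2}/x_{i}\) forces \(f(y_{1},\dotsc,y_{m})=0\) for every \((y_{1},\dotsc,y_{m})\in(0,\infty)^{m}\). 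Thus either \(f\equiv0\) on \((0,\infty)^{m}\), where continuity is trivial, or \(f>0\) there; in the latter case the substitution \(g\defeq\log f\circ\exp\) converts the displayed inequality into midpoint convexity of \(g\) on \(\R^{m}\). As \(f\) is monotone in each variable, \(f\) and hence \(g\) are bounded on every box, and a locally bounded midpoint-convex function is convex, therefore continuous; thus \(f\) is continuous on \((0,\infty)^{m}\).

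The remaining, and I expect hardest, step is continuity at the boundary of \(\Rpm\), where some coordinates vanish. Monotonicity guarantees that the pertinent one\nobreakdash-sided limits exist, and the idea is to squeeze the boundary value between them by letting the vanishing coordinates tend to \(0\) inside the two-point estimate; the delicate point is to rule out an upward jump as a coordinate decreases to \(0\), which the inequality on diagonal blocks alone does not obviously control. This is precisely where I would fall back on the explicit description of the \tpsd{ness}\nobreakdash-preserving functions furnished by~\zitaa{MR3536943}{\cprop{1} and \cthm{2}}: once \(f\) is known to be of that form, the interior convexity established above extends to all of \(\Rpm\), and nonnegativity, monotonicity and continuity hold as asserted.
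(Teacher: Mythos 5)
Your argument is essentially the paper's proof, fleshed out. The paper dismisses nonnegativity and monotonicity as easy (your block constructions are exactly the intended ones), quotes a Cauchy--Schwarz-type inequality from \zitaa{MR3536943}{\cprop{1}}, and then asserts that continuity follows by ``a straightforward modification'' of (a)\(\Rightarrow\)(b) in \zitaa{MR2547900}{\clem{2.1}} --- which is precisely the midpoint-convexity-of-\(\log f\circ\exp\) plus local-boundedness (Bernstein--Doetsch) argument you carry out. Your derivation of the inequality via the singular \(2\times2\) pairings is correct; in fact your form \(f\bigl(\sqrt{a_1c_1},\dotsc,\sqrt{a_mc_m}\bigr)^2\le f(a)f(c)\) is the one that actually follows from the hypothesis (the form printed in the paper, \(f(x_1y_1,\dotsc,x_my_m)^2\le f(x_1,\dotsc,x_m)f(y_1,\dotsc,y_m)\), fails already for \(m=1\), \(f(t)=t\), \(x=y=2\), so it must be read as \(f(x_1y_1,\dotsc)^2\le f(x_1^2,\dotsc)f(y_1^2,\dotsc)\), which is your inequality reparametrized). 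Up to and including continuity on the open orthant, your proof is complete.

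The step you leave open --- continuity at boundary points of \(\Rpm\) --- is a genuine obstruction, not a missing detail, and your proposed fallback cannot repair it: the hypothesis of the lemma admits functions that are discontinuous at the boundary, so no characterization of the order-\(2\) preservers can deliver continuity there. Concretely, for \(m=1\) let \(f(0)=0\) and \(f(x)=1\) for \(x>0\); if \(\bigl(\begin{smallmatrix}a&b\\b&c\end{smallmatrix}\bigr)\) is \tpsd{} with \(a,b,c\geq0\) and \(b>0\), then \(ac\geq b^2>0\) forces \(a,c>0\), so the image is the all-ones matrix and \(f\) preserves \tpsd{ness} on \(\psddb{2}{1}\) while jumping at \(0\). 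For general \(m\) the indicator of nonsingularity \(f(x_1,\dotsc,x_m)=\prod_{j}\mathbf{1}_{(0,\infty)}(x_j)\) works, since \tpsd{ness} of \(\bigl(\begin{smallmatrix}A&B\\B&C\end{smallmatrix}\bigr)\) forces the range of \(B\) into the ranges of \(A\) and \(C\). Monotonicity only bounds the boundary value from above by the inner limit, and a strict drop is possible. So the continuity claim can only hold on \((0,\infty)^m\); your proof establishes everything that is actually provable, and the gap you flagged is equally present behind the paper's one-line citation.
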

\begin{proof}
 The first two properties of \(f\) can be obtained easily. Since \(f\) satisfies the inequality \(f(x_1y_1,\dotsc,x_my_m)^2\leq f(x_1,\dotsc,x_m)f(y_1,\dotsc,y_m)\) for all \(\mat{x_1,\dotsc,x_m},\mat{y_1,\dotsc,y_m}\in\Rpm\), \cf{}~\zitaa{MR3536943}{\cprop{1}}, its continuity can be shown by a straightforward modification of the proof of the conclusion~(a)\(\Rightarrow\)(b) of~\zitaa{MR2547900}{\clem{2.1}}.
\end{proof}

 We shall say that a symmetric function \(f\colon\Rpm\to\C\) \noti{preserves \tpsd{ness}} if it preserves \tpsd{ness} on \(\psddb{n}{m}\) for all \(n\in\N\). In the case \(m=1\), the problem to describe all functions preserving \tpsd{ness} and some related problems have a long history and were completely solved by several authors applying different methods, \cf{}~\zitas{MR747302,MR502895,MR0163181,MR0152832,MR2547900,MR0264736,MR0109204,MR0005922,MR537245}. A well known result is stated in \rthm{T2}.
 
\begin{theorem}\label{T2}
 A function \(f\colon\Rp\to\C\) preserves \tpsd{ness} if and only if it has a power series expansion \(f(x)=\sum_{j=0}^\infty a_jx^j\), \(x\in\Rp\), where \(a_j\in\Rp\), \(j\in\Zp\).
\end{theorem}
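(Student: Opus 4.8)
The two implications are of very different character, so I would treat them separately.

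For sufficiency I would invoke the Schur product theorem. Assume \(f(x)=\sum_{j=0}^\infty a_jx^j\) with \(a_j\in\Rp\). Since the series converges at every \(x\in\Rp\), it converges entrywise when evaluated at the entries of any \(A=\mat{a_{ab}}\in\psddb{n}{1}\), and \(\mat{f(a_{ab})}=\sum_{j=0}^\infty a_jA^{\circ j}\), where \(A^{\circ j}\) denotes the \(j\)-th entrywise (Hadamard) power of \(A\) and \(A^{\circ0}\) is the all-ones matrix (rank one, hence \tpsd{}). By the Schur product theorem each \(A^{\circ j}\) lies in \(\psdn\); as \(a_j\geq0\) and \(\psdn\) is a closed convex cone, every partial sum lies in \(\psdn\), and so does the entrywise limit \(\mat{f(a_{ab})}\). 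Thus \(f\) preserves \tpsd{ness}.

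For necessity the easy preliminaries come first. Testing on the \(1\times1\) matrices \(\mat{x}\), \(x\in\Rp\), forces \(f(x)\in\Rp\), so \(f\) is real and \tnn{}. Since a \tpsd{ness}-preserving \(f\) in particular preserves it on \(\psddb{2}{1}\), the Lemma (with \(m=1\)) shows that \(f\) is moreover nondecreasing and continuous on \(\Rp\).

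The substance is to upgrade these to absolute monotonicity. I would aim to prove that \(f\) is smooth on \((0,\infty)\) with \(f^{(k)}\geq0\) for every \(k\in\Zp\) (equivalently, that all divided differences \(f[x_0,\dots,x_k]\) are \tnn{}), and then conclude by Bernstein's theorem on absolutely monotonic functions, which yields \(f(x)=\sum_{j}a_jx^j\) with \(a_j=f^{(j)}(0)/j!\geq0\) converging throughout \(\Rp\) (continuity at \(0\) handling the endpoint). To obtain the \(k\)-th order condition I would build, for each \(k\) and chosen nodes \(0\le x_0<\dots<x_k\), a matrix in \(\psddb{n}{1}\) with \(n\) depending on \(k\), assembled from rank-one blocks \(\mat{t_at_b}\) and their Hadamard products (which stay in the cone), so that a fixed quadratic form applied to the image matrix reduces, after differencing in the node parameters, to a positive multiple of \(f[x_0,\dots,x_k]\); positivity of the image then gives \(f[x_0,\dots,x_k]\geq0\). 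Carrying this out inductively in \(k\) is exactly what forces integrality of the exponents: already the rank-one matrices \(\mat{t_at_b}\) show that \(u\mapsto f(e^u)\) is exponentially convex, hence \(f(x)=\int_{[0,\infty)}x^t\,d\mu(t)\) for a \tnn{} measure \(\mu\), and the full \(n\)-dimensional strength is precisely what collapses \(\mu\) onto \(\Zp\).

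I expect the main obstacle to be this last step. The delicate points are (i) designing test matrices finely tuned to certify the \(k\)-th divided difference, and (ii) running the limiting/differencing argument using only the already-established continuity rather than assuming differentiability of \(f\) in advance. The clean way around (ii) is to phrase everything through finite differences, which need no a priori smoothness, and to import smoothness and the nonnegativity of the Taylor coefficients only at the very end via Bernstein's theorem.
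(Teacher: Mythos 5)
The paper itself offers no proof of \rthm{T2}: it is quoted as a classical result (Horn, Vasudeva, et al.), and what the paper actually proves is the \(m\)\nobreakdash-variable generalization of the ``only if'' part, via \rlem{L3}, \rlem{L4} and \rthm{T5}. So the fair comparison is with that machinery specialized to \(m=1\). Your sufficiency argument is complete and correct: Schur's product theorem gives \(A^{\circ j}\in\psdn\), nonnegativity of the \(a_j\) and closedness of \(\psdn\) under entrywise limits finish it, and this is the same device the paper uses for the implication from the power series form to the preservation property in its concluding theorem. The preliminary reductions for necessity (reality, monotonicity, continuity from the \(2\times2\) case) are also fine.

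The gap is in the heart of the necessity direction. Your plan --- show all finite differences (equivalently, for smooth \(f\), all derivatives) are nonnegative by testing on suitable matrices, then invoke Bernstein --- is exactly the classical route, but the one step you defer, ``designing test matrices finely tuned to certify the \(k\)\nobreakdash-th divided difference,'' is the entire mathematical content of the theorem, and you do not carry it out. The construction you are missing is the \(m=1\) case of \rlem{L3} and \rlem{L4}: fix \(x\in\Rp\), take \(n=k+2\) pairwise distinct positive numbers \(a_1,\dotsc,a_n\), and test \(f\) on the matrix \(\bigl(x+t\,a_\beta a_\gamma\bigr)_{\beta,\gamma=1}^{n}\), which is \(x\) times the all-ones matrix plus \(t\) times a rank-one matrix, hence in \(\psddb{n}{1}\). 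Taylor-expanding \(f(x+ta_\beta a_\gamma)\) to order \(k\) in \(t\) and using invertibility of the Vandermonde matrix \(\bigl(a_\beta^{\,j-1}\bigr)\) to pick a vector \(z\) with \(\sum_\beta z_\beta a_\beta^{\,j}=0\) for \(j<k\) and \(=1\) for \(j=k\) annihilates every Taylor term except the \(k\)\nobreakdash-th; letting \(t\downarrow0\) yields \(f^{(k)}(x)\geq0\). The lack of a priori smoothness is handled not by reworking everything with divided differences but by mollifying \(f\) first, checking via Riemann sums that the mollification still preserves \tpsd{ness}, applying the above to it, and passing to the limit of its finite differences --- precisely the argument of \rthm{T5}. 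Finally, your proposed shortcut via exponential convexity does not bypass this work: the rank-one tests only give \(f(x)=\int x^{t}\,\dif\mu(t)\), and ``collapsing \(\mu\) onto \(\Zp\)'' is equivalent to the FitzGerald--Horn analysis of fractional Hadamard powers, which is itself proved by the same Vandermonde--Taylor device; you give no argument for that collapse. As written, the necessity half is a correct strategy whose decisive lemma is asserted rather than proved.
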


 Our \rthm{T5} below generalizes the ``only-if'' part of the preceding theorem to arbitrary \(m\in\N\).

 For \(p\in\Zp\), denote by \symba{\Conp}{k} the set of all \(\mat{p_1,\dotsc,p_m}\in\Zpm\) such that \(\sum_{\alpha=1}^mp_\alpha\leq p\). If \(\mat{p_1,\dotsc,p_m}\in\Conp\) and \(a_{\alpha\beta}\), \(\alpha\in\mn{1}{m}\), \(\beta\in\mn{1}{n}\), are positive real numbers, let \symba{\pvpm}{v} be a vector of \(\Rpm\), whose \(\beta\)\nobreakdash-th entry equals \(\prod_{\alpha=1}^ma_{\alpha\beta}^{p_\alpha}\).
 
\begin{lemma}\label{L3}
 Let \(m\in\N\), \(p\in\Zp\), and \(n\defeq\rk{p+2}^m\). There exist positive real numbers \(a_{\alpha\beta}\), \(\alpha\in\mn{1}{m}\), \(\beta\in\mn{1}{n}\), such that the corresponding vectors \(\pvpm\), \(\mat{p_1,\dotsc,p_m}\in\Conp\), are linearly independent.
\end{lemma}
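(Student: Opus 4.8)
The plan is to manufacture the numbers \(a_{\alpha\beta}\) from a grid, so that the array recording all the relevant monomials becomes a Kronecker product of Vandermonde matrices. The first observation I would make is purely combinatorial: if \(\mat{p_1,\dotsc,p_m}\in\Conp\), then \(p_\alpha\leq\sum_{\gamma=1}^m p_\gamma\leq p\leq p+1\) for every \(\alpha\), so \(\Conp\) is contained in the full index grid \(G\defeq\{0,\dotsc,p+1\}^m\), which has exactly \(n=\rk{p+2}^m\) elements. Consequently it suffices to choose the \(a_{\alpha\beta}\) so that the larger family of vectors \(\pvpm\) with \(\mat{p_1,\dotsc,p_m}\in G\) is linearly independent; the family required in the lemma is then a subfamily and inherits independence.

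Next I would set up the grid explicitly. Fix a bijection between the column indices \(\beta\in\mn{1}{n}\) and the tuples \(\mat{k_1,\dotsc,k_m}\in G\), and for each \(\alpha\in\mn{1}{m}\) choose \(p+2\) pairwise distinct positive reals \(t_{\alpha,0},\dotsc,t_{\alpha,p+1}\). Putting \(a_{\alpha\beta}\defeq t_{\alpha,k_\alpha}\) for the \(\beta\) corresponding to \(\mat{k_1,\dotsc,k_m}\), the \(\beta\)-th entry of \(\pvpm\) becomes \(\prod_{\alpha=1}^m t_{\alpha,k_\alpha}^{p_\alpha}\). Collecting the vectors \(\pvpm\), \(\mat{p_1,\dotsc,p_m}\in G\), as the rows of a square matrix \(M\) of order \(n\), whose entry in row \(\mat{p_1,\dotsc,p_m}\) and column \(\mat{k_1,\dotsc,k_m}\) is \(\prod_{\alpha=1}^m t_{\alpha,k_\alpha}^{p_\alpha}\), the key structural identity I would establish is \(M=V_1\otimes\dotsb\otimes V_m\), where \(V_\alpha\) denotes the \(\rk{p+2}\times\rk{p+2}\) matrix with entry \(t_{\alpha,k}^q\) in row \(q\) and column \(k\), for \(q,k\in\{0,\dotsc,p+1\}\).

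Finally I would invoke two standard facts. Each \(V_\alpha\) is a Vandermonde matrix built from the pairwise distinct nodes \(t_{\alpha,0},\dotsc,t_{\alpha,p+1}\), hence nonsingular; and the determinant of a Kronecker product is a product of positive powers of the determinants of its factors, so \(M\) is nonsingular as well. Therefore the \(n\) rows of \(M\), namely the vectors \(\pvpm\) with \(\mat{p_1,\dotsc,p_m}\in G\), are linearly independent, and in particular the subfamily indexed by \(\Conp\) is linearly independent, which is the assertion of the lemma.

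I do not expect a genuine difficulty in this argument. The only point that requires care is the verification of the identity \(M=V_1\otimes\dotsb\otimes V_m\) under the chosen multi-index labelling of rows and columns---essentially the bookkeeping that matches the factor \(t_{\alpha,k_\alpha}^{p_\alpha}\) to the \(\mat{p_\alpha,k_\alpha}\)-entry of \(V_\alpha\). Once this is in place, the conclusion is immediate from the nonvanishing of Vandermonde determinants and the multiplicativity of determinants under the Kronecker product; choosing, for instance, distinct integers for the \(t_{\alpha,k}\) makes the construction completely explicit.
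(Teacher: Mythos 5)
Your argument is correct and complete, but it reaches the Vandermonde determinant by a genuinely different decomposition than the paper does. The paper chooses \(n=\rk{p+2}^m\) pairwise distinct positive reals \(a_{1\beta}\) and sets \(a_{\alpha\beta}\defeq a_{1\beta}^{\rk{p+2}^{\alpha-1}}\), so that the \(\beta\)\nobreakdash-th entry of \(\pvpm\) equals \(a_{1\beta}\) raised to the power \(\sum_{\alpha=1}^mp_\alpha\rk{p+2}^{\alpha-1}\); every vector \(\pvpm\) is then a row of a single \(n\times n\) Vandermonde matrix with nodes \(a_{1\beta}\), and the remaining work is to check that distinct elements of \(\Conp\) select distinct rows, \ie{}\ that the base-\(\rk{p+2}\) encoding \(\mat{p_1,\dotsc,p_m}\mapsto\sum_{\alpha=1}^mp_\alpha\rk{p+2}^{\alpha-1}\) is injective, which the paper settles with a root bound for polynomials. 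You instead enlarge \(\Conp\) to the grid \(\{0,\dotsc,p+1\}^m\) of cardinality \(n\), index the columns by that grid, and factor the resulting square array as a Kronecker product \(V_1\otimes\dotsb\otimes V_m\) of \(m\) Vandermonde matrices of order \(p+2\), concluding from the nonsingularity of each factor and the multiplicativity of the determinant under Kronecker products. The trade-off is clear: the paper needs only one sequence of distinct nodes and one Vandermonde determinant, but must prove injectivity of the exponent encoding; you replace that step by standard Kronecker-product facts plus the index bookkeeping you yourself flag, and as a small bonus your construction shows that the full family indexed by the grid is a basis of \(\R^n\). Both constructions are explicit, and nothing in your proposal needs repair.
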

\begin{proof}
 For \(n=\rk{p+2}^m\), let \(a_{1\beta}\) be \(n\) pairwise different positive real numbers and \(a_{\alpha\beta}\defeq a_{1\beta}^{\rk{p+2}^{\alpha-1}}\), \(\alpha\in\mn{1}{m}\), \(\beta\in\mn{1}{n}\). It is easy to see that for \(\mat{p_1,\dotsc,p_m}\in\Conp\), the transpose of the corresponding vector \(\pvpm\) is a column vector of the Vandermonde matrix \(\mat{a_{1\beta}^{\gamma-1}}_{\beta,\gamma=1,\dotsc,n}\). To finish the proof it is enough to show that \(\pvpm\neq\pvqm\) if \(\mat{p_1,\dotsc,p_m}\neq\mat{q_1,\dotsc,q_m}\), \(\mat{p_1,\dotsc,p_m},\mat{q_1,\dotsc,q_m}\in\Conp\). However, the equality \(\mat{p_1,\dotsc,p_m}=\mat{q_1,\dotsc,q_m}\) would imply that \(p+2\) is a zero of the polynomial \(P\): \(P(z)=\sum_{\alpha=1}^m\rk{p_\alpha-q_\alpha}z^\alpha\), \(z\in\C\), which is a contradiction to the estimate \(\abs{z_0}\leq\max\setaa{1+\abs{p_\alpha-q_\alpha}}{\alpha\in\mn{1}{m}}\) for any zero \(z_0\) of \(P\), \cf{}~\zitaa{MR2978290}{\cprob{5.6.26}}.
\end{proof}

 In the case \(m=1\) the result of the next lemma was proved by Horn, \cf{}\ the first part of the proof of~\zitaa{MR0264736}{\cthm{1.1}}. In 1979 Vasudeva~\zita{MR537245} published a simplified proof, which can be easily adapted to the case that \(m\) is an arbitrary positive integer.

\begin{lemma}\label{L4}
 Let \(f\colon\Rpm\to\Rp\) be a symmetric function, which has continuous partial derivatives of arbitrary order. If \(f\) preserves \tpsd{ness}, then all its partial derivatives are \tnn{}.
\end{lemma}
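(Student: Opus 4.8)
The plan is to adapt the Horn--Vasudeva argument by fixing a single but arbitrary partial derivative and recovering it as a limit of quadratic forms evaluated on $f$-images of suitably chosen partitioned matrices. Since all partial derivatives of $f$ are continuous, it suffices to prove their non-negativity on the interior $\rk{0,\infty}^m$, the boundary values following by continuity. So I would fix a base point $x=\rk{x_1,\dotsc,x_m}$ with $x_\alpha>0$ and a multi-index $k=\rk{k_1,\dotsc,k_m}\in\Zpm$, abbreviating $\abs{k}\defeq k_1+\dotsb+k_m$, $k!\defeq k_1!\dotsm k_m!$, and $\partial^kf\defeq\frac{\partial^{\abs{k}}f}{\partial x_1^{k_1}\dotsm\partial x_m^{k_m}}$. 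Put $p\defeq\abs{k}$ and $n\defeq\rk{p+2}^m$, and let $a_{\alpha\beta}$, $\alpha\in\mn{1}{m}$, $\beta\in\mn{1}{n}$, be the positive reals supplied by Lemma~\ref{L3}.

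For $\epsilon>0$ I would consider the $m\times m$ diagonal blocks
\[
 A_{\beta\gamma}\defeq\diag\rk{x_1+\epsilon^2a_{1\beta}a_{1\gamma},\dotsc,x_m+\epsilon^2a_{m\beta}a_{m\gamma}},\qquad\beta,\gamma\in\mn{1}{n}.
\]
Each $A_{\beta\gamma}$ is diagonal with positive entries, hence \tpsd{}. Moreover, a simultaneous permutation of rows and columns turns $\mat{A_{\beta\gamma}}$ into the direct sum over $\alpha\in\mn{1}{m}$ of the $n\times n$ matrices $x_\alpha\mathbf1^\top\mathbf1+\epsilon^2\rk{a_{\alpha\beta}a_{\alpha\gamma}}_{\beta,\gamma=1}^n$, each of which is a sum of two \tpsd{} rank-one matrices and so is \tpsd{}. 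Therefore $\mat{A_{\beta\gamma}}\in\psddb{n}{m}$, and since $f$ preserves \tpsd{ness}, $\mat{f\rk{A_{\beta\gamma}}}\in\psdn$.

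Because each $A_{\beta\gamma}$ is diagonal, $f\rk{A_{\beta\gamma}}=f\rk{x_1+\epsilon^2a_{1\beta}a_{1\gamma},\dotsc,x_m+\epsilon^2a_{m\beta}a_{m\gamma}}$, and the multivariate Taylor formula up to total order $p$ (legitimate since $f$ is smooth and the increments are $O\rk{\epsilon^2}$) gives, at the level of the whole matrix,
\[
 \mat{f\rk{A_{\beta\gamma}}}=\sum_{q\in\Conp}\frac{\partial^qf(x)}{q!}\,\epsilon^{2\abs{q}}\,v_q^\top v_q+R(\epsilon),\qquad R(\epsilon)=O\rk{\epsilon^{2p+2}},
\]
where $v_q$ denotes the row vector whose $\beta$-th entry is $\prod_{\alpha=1}^ma_{\alpha\beta}^{q_\alpha}$; in the notation preceding Lemma~\ref{L3} this is the vector $\pvpm$ with $\rk{p_1,\dotsc,p_m}$ replaced by $q$, so that the $\rk{\beta,\gamma}$ entry of $v_q^\top v_q$ is exactly $\prod_{\alpha}a_{\alpha\beta}^{q_\alpha}\prod_{\alpha}a_{\alpha\gamma}^{q_\alpha}$.

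By Lemma~\ref{L3} the vectors $v_q$, $q\in\Conp$, are linearly independent, so the map $w\mapsto\rk{wv_q^\top}_{q\in\Conp}$ from $\mathbb{R}^n$ onto $\mathbb{R}^{\abs{\Conp}}$ is surjective and some row vector $w\in\mathbb{R}^n$ satisfies $wv_k^\top=1$ and $wv_q^\top=0$ for every $q\in\Conp$ with $q\neq k$. Evaluating the quadratic form of the \tpsd{} matrix $\mat{f\rk{A_{\beta\gamma}}}$ at this $w$ annihilates every term except the one indexed by $k$ and leaves
\[
 0\leq w\,\mat{f\rk{A_{\beta\gamma}}}\,w^\top=\frac{\partial^kf(x)}{k!}\,\epsilon^{2p}+O\rk{\epsilon^{2p+2}}.
\]
Dividing by $\epsilon^{2p}$ and letting $\epsilon\to0^+$ yields $\partial^kf(x)\geq0$; as $x$ and $k$ were arbitrary, every partial derivative of $f$ is \tnn{}. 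The only genuinely delicate step is this last extraction, namely isolating the contribution of the single multi-index $k$ from those of all other indices of equal or smaller total degree; this separation is precisely what the linear independence in Lemma~\ref{L3} provides, and it is here that the size $n=\rk{p+2}^m$ enters. The positivity of the block construction and the remainder estimate are routine, and for $m=1$ the argument collapses to Vasudeva's, the $v_q$ reducing to powers of a single Vandermonde node.
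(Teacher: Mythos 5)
Your proposal is correct and follows essentially the same route as the paper: the same block matrices \(\bigl(\diag(x_\alpha+t\,a_{\alpha\beta}a_{\alpha\gamma})\bigr)_{\beta,\gamma}\), the same appeal to \rlem{L3} to build a dual vector annihilating all Taylor terms except the targeted multi-index, and the same limit extraction of the derivative. The only cosmetic differences are that you use a single scalar increment \(\epsilon^2\) where the paper uses a vector \((t_1,\dotsc,t_m)\), and you expand to total order \(p\) with an \(O(\epsilon^{2p+2})\) remainder where the paper expands to order \(p-1\) and keeps the order-\(p\) Lagrange remainder.
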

\begin{proof}
 Let \(\mat{x_1,\dotsc,x_m}\in\Rpm\) and \(n\in\N\). If \(\aab\) are positive real numbers and \(t_\alpha\in\Rpn\), \(\alpha\in\mn{1}{m}\), \(\beta\in\mn{1}{n}\), the matrix \(\mat{\diag\rk{x_1+t_1a_{1\beta}a_{1\gamma},\dotsc, x_m+t_ma_{m\beta}a_{m\gamma}}}_{\beta,\gamma=1,\dotsc,n}\) belongs to \(\psddb{n}{m}\). From the assumption on \(f\) it follows
\begin{align}\label{E1}
 \sum_{\beta,\gamma=1}^n\ko{z_\beta}z_\gamma f\rk{x_1+t_1a_{1\beta}a_{1\gamma},\dotsc, x_m+t_ma_{m\beta}a_{m\gamma}}&\geq0,&\mat{z_1,\dotsc,z_m}&\in\Cm.
\end{align}
 Expanding the function \(g_{\beta\gamma}\colon\Rpm\to\Rp\), \(g_{\beta\gamma}(t_1,\dotsc,t_m)\defeq f\rk{x_1+t_1a_{1\beta}a_{1\gamma},\dotsc, x_m+t_ma_{m\beta}a_{m\gamma}}\) into a Taylor polynomial of degree \(p-1\), from \eqref{E1} we obtain
\begin{multline}\label{E2}
 \sum_{\beta,\gamma=1}^n\ko{z_\beta}z_\gamma\Biggl\{\sum_{\mat{p_1,\dotsc,p_m}\in\Con{p-1}}\prod_{\alpha=1}^m\frac{1}{p_\alpha!}\rk{t_\alpha a_{\alpha\beta}a_{\alpha\gamma}}^{p_\alpha}\frac{\partial^{p_1+\dotsb+p_m}}{\partial t_1^{p_1}\dotsm\partial t_m^{p_m}}g_{\beta\gamma}(t_1,\dotsc,t_m)\\
 +\sum_{\mat{p_1,\dotsc,p_m}\in\Con{p}\setminus\Con{p-1}}\prod_{\alpha=1}^m\frac{1}{p_\alpha!}\rk{t_\alpha a_{\alpha\beta}a_{\alpha\gamma}}^{p_\alpha}\\
 \times\frac{\partial^{p_1+\dotsb+p_m}}{\partial t_1^{p_1}\dotsm\partial t_m^{p_m}}f\rk{x_1+\theta_{\beta\gamma}t_1a_{1\beta}a_{1\gamma},\dotsc, x_m+\theta_{\beta\gamma}t_ma_{m\beta}a_{m\gamma}}\Biggr\}
 \geq0,
\end{multline}
 where \(\theta_{\beta\gamma}\in[0,1]\), \(\beta,\gamma\in\mn{1}{n}\). Let \(\mat{q_1,\dotsc,q_m}\) be an arbitrary element of \(\Con{p}\setminus\Con{p-1}\). From \rlem{L3} it follows that for \(n=\rk{p+2}^m\), the positive numbers \(\aab\) can be chosen in such a way that there exist real numbers \(z_\beta\) satisfying the system of linear equations \(\sum_{\beta=1}^n\prod_{\alpha=1}^m\aab^{p_\alpha}z_\beta=0\) for \(\mat{p_1,\dotsc,p_m}\in\Conp\setminus\set{\mat{q_1,\dotsc,q_m}}\) and \(\sum_{\beta=1}^n\prod_{\alpha=1}^m\aab^{q_\alpha}z_\beta=1\). Therefore, \eqref{E2} implies that
 \begin{multline*}
  \prod_{\alpha=1}^m\frac{1}{q_\alpha!}\rk{t_\alpha}^{q_\alpha}\sum_{\beta,\gamma=1}^n\prod_{\alpha'=1}^m\ko{z_\beta}z_\gamma\rk{a_{\alpha'\beta}a_{\alpha'\gamma}}^{q_{\alpha'}}\\
  \times\frac{\partial^{q_1+\dotsb+q_m}}{\partial t_1^{q_1}\dotsm\partial t_m^{q_m}}f\rk{x_1+\theta_{\beta\gamma}t_1a_{1\beta}a_{1\gamma},\dotsc, x_m+\theta_{\beta\gamma}t_ma_{m\beta}a_{m\gamma}}\geq0,
 \end{multline*}
 which yields \(\frac{\partial^{q_1+\dotsb+q_m}}{\partial x_1^{q_1}\dotsm\partial x_m^{q_m}}f\rk{x_1,\dotsc,x_m}\geq0\) by letting \(\mat{t_1,\dotsc,t_m}\) tend to \(\mat{0,\dotsc,0}\).
\end{proof}

\begin{theorem}\label{T5}
 Let \(f\colon\Rpm\to\Rp\) be a symmetric function preserving \tpsd{ness}. Then \(f\) has a power series expansion
\begin{multline}\label{E3}
 f\rk{x_1,\dotsc,x_m}
 =\sum_{\mat{p_1,\dotsc,p_m}\in\Zpm}a_{p_1,\dotsc,p_m}x_1^{p_1}\dotsm x_m^{p_m},\\
 \mat{x_1,\dotsc,x_m}\in\Rpm,a_{p_1,\dotsc,p_m}\in\Rp,\mat{p_1,\dotsc,p_m}\in\Zpm.
\end{multline}
\end{theorem}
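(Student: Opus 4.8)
The plan is to reduce to the smooth case by mollification, apply \rlem{L4} to obtain that the mollifications are absolutely monotonic (all partial derivatives \tnn{}), deduce a power series with \tnn{} coefficients from absolute monotonicity, and finally pass to the limit. A preliminary observation streamlines everything: restricting the preservation hypothesis to block matrices whose blocks are diagonal shows that whenever \(P^{(1)},\dotsc,P^{(m)}\in\psdn\) have \tnn{} entries, the matrix \(\bigl(f(P^{(1)}_{\alpha\beta},\dotsc,P^{(m)}_{\alpha\beta})\bigr)_{\alpha,\beta=1,\dotsc,n}\) is again \tpsd{}. Indeed, a block matrix with diagonal blocks lies in \(\psddb{n}{m}\) precisely when each of the \(m\) scalar matrices formed from a fixed diagonal position is \tpsd{} with \tnn{} entries, and then \(f\) acts entrywise on these. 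This is exactly the form of the hypothesis used in the proof of \rlem{L4}.

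Next I would smooth \(f\) while keeping this entrywise preservation. Fix a \tnn{} \(C^\infty\) function \(\psi_\varepsilon\) on \(\Rpm\) with compact support in the open orthant \((0,\varepsilon)^m\), invariant under permutations of its arguments and with total integral \(1\), and set \(f_\varepsilon(x)\defeq\int f(x+c)\,\psi_\varepsilon(c)\,dc\) for \(x\in\Rpm\). For each fixed \(c\) with positive entries, \(x\mapsto f(x+c)\) still has the entrywise preservation property, because adding a common constant \(c_\alpha\) to every entry of \(P^{(\alpha)}\) amounts to adding \(c_\alpha\) times the all-ones (rank-one, hence \tpsd{}) matrix, which preserves both \tpsd{ness} and nonnegativity of entries; integrating \tpsd{} matrices against the \tnn{} weight \(\psi_\varepsilon\) preserves \tpsd{ness}. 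Thus \(f_\varepsilon\) inherits the entrywise preservation property (a multiplicative convolution, closer to Vasudeva's method, would serve equally well). Since \(f\) is \tnn{}, nondecreasing in each variable and continuous, \(f_\varepsilon\colon\Rpm\to\Rp\) is symmetric, the substitution \(y=x+c\) together with the interior support of \(\psi_\varepsilon\) lets one differentiate under the integral sign so that \(f_\varepsilon\in C^\infty(\Rpm)\), and \(f_\varepsilon\to f\) uniformly on compact sets as \(\varepsilon\to0\).

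Because the proof of \rlem{L4} only uses block matrices with diagonal blocks, it applies verbatim to each \(f_\varepsilon\), giving that every partial derivative of \(f_\varepsilon\) is \tnn{} on \(\Rpm\). A function on \(\Rpm\) that is absolutely monotonic in this sense is represented by its Taylor series at the origin with \tnn{} coefficients, convergent on all of \(\Rpm\): in one variable this is the classical theorem on absolutely monotonic functions (Taylor's formula with \tnn{} remainder shows the partial sums increase to the function), and the several-variable case follows by applying the one-variable statement successively in each argument, the required rearrangements being legitimate since all terms are \tnn{}. Hence \(f_\varepsilon(x)=\sum_{(p_1,\dotsc,p_m)\in\Zpm}a^\varepsilon_{p_1,\dotsc,p_m}x_1^{p_1}\dotsm x_m^{p_m}\) with \(a^\varepsilon_{p_1,\dotsc,p_m}=\frac{1}{p_1!\dotsm p_m!}\frac{\partial^{p_1+\dotsb+p_m}}{\partial x_1^{p_1}\dotsm\partial x_m^{p_m}}f_\varepsilon(0)\ge0\).

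It remains to let \(\varepsilon\to0\). Fixing a point \(x\) with all coordinates positive and a slightly larger point \(y\) with \(x_\alpha=\theta y_\alpha\), \(0<\theta<1\), monotonicity gives \(f_\varepsilon(y)\le f(y+\varepsilon\mathbf 1)\), which is bounded for small \(\varepsilon\); since every monomial is \tnn{}, the tail \(\sum_{p_1+\dotsb+p_m>N}a^\varepsilon_{p_1,\dotsc,p_m}x_1^{p_1}\dotsm x_m^{p_m}\) is at most \(\theta^{N}f_\varepsilon(y)\), hence small uniformly in \(\varepsilon\). In particular each coefficient is bounded as \(\varepsilon\to0\), so a diagonal subsequence yields \(a^\varepsilon_{p_1,\dotsc,p_m}\to a_{p_1,\dotsc,p_m}\ge0\) for every multi-index; the uniform tail bound then permits interchanging the limit with the summation, giving \(f(x)=\lim_\varepsilon f_\varepsilon(x)=\sum a_{p_1,\dotsc,p_m}x_1^{p_1}\dotsm x_m^{p_m}\) first for \(x\) in the open orthant and then, by continuity and nonnegativity of the coefficients, on all of \(\Rpm\), which is \eqref{E3}. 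I expect the main obstacle to be the smoothing step: one must manufacture smooth approximants that still preserve \tpsd{ness}, and the decisive points are that a common additive shift is a rank-one perturbation and that \rlem{L4} needs only diagonal blocks. The several-variable passage from absolute monotonicity to a convergent power series and the uniform control needed to exchange limit and summation are the remaining technical work.
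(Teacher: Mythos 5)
Your argument is correct and shares the paper's skeleton---mollify \(f\), apply \rlem{L4} to the smooth approximants, pass to the limit---but both the smoothing step and the endgame are executed differently. For the smoothing, the paper convolves with a kernel supported in \((-1,0)^m\) and argues via Riemann sums that \(f_\epsilon\) preserves \tpsd{ness} on all of \(\psddb{n}{m}\), the point being that \(\Aab-\epsilon\xi_j\Iu{m}\) shifts all eigenvalues of a block by the same amount; you instead note that \rlem{L4} only ever tests block matrices with diagonal blocks, for which independent shifts in the \(m\) coordinates are realized by adding \tnn{} multiples of the rank-one all-ones matrix, so it suffices to verify the diagonal-block (entrywise) preservation property for your \(f_\varepsilon\). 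This is a sound reading of what \rlem{L4} actually needs, and it sidesteps the delicate question of whether an \(m\)\nobreakdash-dimensional mollification preserves \tpsd{ness} on general, non-diagonal blocks. For the endgame, the paper converts \tnn{} partial derivatives of \(f_\epsilon\) into \tnn{} finite differences, lets \(\epsilon\) tend to \(0\) at the level of finite differences, and cites a theorem characterizing such functions by power series with \tnn{} coefficients; you instead expand each \(f_\varepsilon\) directly by the iterated one-variable theorem on absolutely monotonic functions and recover \eqref{E3} for \(f\) via a diagonal subsequence of coefficients combined with the uniform tail bound \(\sum_{p_1+\dotsb+p_m>N}a^\varepsilon_{p_1,\dotsc,p_m}x_1^{p_1}\dotsm x_m^{p_m}\leq\theta^{N+1}f_\varepsilon(x_1/\theta,\dotsc,x_m/\theta)\). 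Your route is longer but self-contained where the paper leans on a citation, and the limit interchange you describe (bounded coefficients, uniform geometric tail, monotone convergence to reach the boundary of \(\Rpm\)) is complete; both approaches are valid.
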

\begin{proof}
 Let \(\psi\colon\R\to\Rp\) have support on \((-1,0)\), continuous partial derivatives of arbitrary order and satisfy \(\int_\R\psi(t)\dif t=1\). Define \(\phi\rk{t_1,\dotsc,t_m}\defeq\prod_{j=1}^m\psi(t_j)\), \(\mat{t_1,\dotsc,t_m}\in\Rm\), and then for arbitrary \(\epsilon\in(0,\infty)\) the functions \(\phi_\epsilon(\bft)\defeq\phi(\bft/\epsilon)\), \(\bft\in\Rm\), and
\begin{align}\label{E4}
 f_\epsilon(\bfx)
 &\defeq\frac{1}{\epsilon^m}\int_{\Rm}f(\bft)\phi_\epsilon(\bfx-\bft)\dif\bft
 =\int_{(-1,0)^m}f(\bfx-\epsilon\bft)\phi_\epsilon(\bft)\dif\bft,&
 \bfx&\in\Rm.
\end{align}
The function \(f_\epsilon\) is symmetric, \tnn{}, and has continuous partial derivatives of arbitrary order. The integral at the right-hand side of \eqref{E4} is the limit of the integral sums of the form \(\sum_{j=1}^{r-1}f\rk{x_1-\epsilon\xi_j,\dotsc, x_m-\epsilon\xi_j}\phi(\xi_j)\rk{t_{j+1}-t_j}^m\), where \(-1=t_1<t_2<\dotsb<t_r=0\) is a partition of the interval \([-1,0]\) and \(\xi_j\in[t_j,t_{j+1}]\), \(j\in\mn{1}{r-1}\). If \(\mat{\Aab}\in\psddb{n}{m}\), then \(\mat{\Aab-\epsilon\xi_j\Iu{m}}\in\psddb{n}{m}\) and \(\eigk(\Aab-\epsilon\xi_j\Iu{m})=\eigk(\Aab)-\epsilon\xi_j\), hence \(\mat{\sum_{j=1}^{r-1}f\rk{\eige(\Aab)-\epsilon\xi_j,\dotsc,\eigm(\Aab)-\epsilon\xi_j}}_{\alpha,\beta=1,\dotsc,n}\) belongs to \(\psdn\) by assumption on \(f\), \(j\in\mn{1}{r-1}\). It follows \(\mat{f_\epsilon(\Aab)}\in\psdn\), and from \rlem{L4} we can conclude that all finite differences of \(f_\epsilon\) are \tnn{}, \cf{}~\zitaa{MR3215179}{\cpage{260}}. Since \(f\) is the pointwise limit of \(f_\epsilon\) if \(\epsilon\) tends to \(0\), all finite differences of \(f\) are \tnn{}. By~\zitaa{MR3215179}{\cthm{8.6}} the representation \eqref{E3} follows.
\end{proof}

 We conclude our paper with an application of the preceding result.
 
\begin{theorem}
 Let \(f\colon\Rpm\to\C\) be a symmetric function. The following assertions are equivalent:
 \begin{aeqi}{0}
  \il{T6.i} The function \(f\) has a power series expansion
\begin{align}\label{E5}
 f\rk{x_1,\dotsc,x_m}&=\sum_{j=0}^\infty b_j\rk{x_1\dotsm x_m}^j,&\mat{x_1,\dotsc,x_m}&\in\Rpm,b_j\in\Rp,j\in\Zp.
\end{align}
  \il{T6.ii} For all \(n\in\N\) and all \(\mat{\Aab},\mat{B_\ab}\in\psdb{n}{m}\) such that \(\mat{\Aab B_\ab}\in\csme{n}{\psd{m}}\), the matrix \(\mat{f(\Aab B_\ab)}\) is \tpsd{}.
 \end{aeqi}
\end{theorem}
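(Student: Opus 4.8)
The plan is to treat the two implications separately, exploiting one unifying observation: on the spectrum the ansatz in \textup{(i)} is merely a function of the determinant. If \(C\in\csmm\) has eigenvalues \(\lambda_1,\dotsc,\lambda_m\), then \(\rk{x_1\dotsm x_m}^j\) evaluated at the spectrum equals \(\rk{\det C}^j\), so \eqref{E5} reads \(f(C)=g(\det C)\) with \(g(t)\defeq\sum_{j=0}^\infty b_jt^j\); since \eqref{E5} converges on all of \(\Rpm\) (take \(x_2=\dotsb=x_m=1\)), \(g\) is entire with \tnn{} coefficients, hence, by the ``if''\nobreakdash-part of \rthm{T2}, it preserves \tpsd{ness} when applied entrywise to a \tpsd{} matrix with complex entries.

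For \textup{(i)}\(\Rightarrow\)\textup{(ii)} I would argue as follows. Let \(\mat{\Aab},\mat{B_\ab}\in\psdb{n}{m}\) with \(\mat{\Aab B_\ab}\in\csme{n}{\psd{m}}\). Writing the \tpsd{} block matrix \(\mat{\Aab}\) as a Gram matrix, \(\Aab=Y_\alpha^*Y_\beta\), the Cauchy--Binet formula expresses \(\det\Aab=\det\rk{Y_\alpha^*Y_\beta}\) as the inner product of the vectors of \(m\times m\) minors of \(Y_\alpha\) and of \(Y_\beta\); hence \(\mat{\det\Aab}\) is itself a Gram matrix and thus \tpsd{}, and likewise \(\mat{\det B_\ab}\). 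By the Schur product theorem their entrywise product \(\mat{\det\Aab\det B_\ab}=\mat{\det\rk{\Aab B_\ab}}\) is \tpsd{}. As each \(\Aab B_\ab\) is \tpsd{}, its determinant is \tnn{} and \(f(\Aab B_\ab)=g\rk{\det\rk{\Aab B_\ab}}\); applying \(g\) entrywise then yields \(\mat{f(\Aab B_\ab)}\in\psdn\), as desired.

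For \textup{(ii)}\(\Rightarrow\)\textup{(i)} I would first reduce to \rthm{T5}. Choosing \(B_\ab\defeq\Iu{m}\) for all \(\alpha,\beta\) gives \(\mat{B_\ab}=J_n\otimes\Iu{m}\in\psdb{n}{m}\), where \(J_n\) is the all\nobreakdash-ones matrix, and \(\Aab B_\ab=\Aab\), so that the condition \(\mat{\Aab B_\ab}\in\csme{n}{\psd{m}}\) simply forces every \(\Aab\) to be \tpsd{}; thus \textup{(ii)} specializes to the assertion that \(f\) preserves \tpsd{ness} on \(\psddb{n}{m}\) for every \(n\in\N\). The lemma of the introduction then shows \(f\) to be \tnn{}, increasing and continuous, so \(f\colon\Rpm\to\Rp\), and \rthm{T5} provides a power series \(f\rk{x_1,\dotsc,x_m}=\sum_{\mat{p_1,\dotsc,p_m}\in\Zpm}a_{p_1,\dotsc,p_m}x_1^{p_1}\dotsm x_m^{p_m}\) with symmetric \tnn{} coefficients. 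It remains to prove that \(a_{p_1,\dotsc,p_m}=0\) unless \(p_1=\dotsb=p_m\), for then \(f\rk{x_1,\dotsc,x_m}=\sum_j a_{j,\dotsc,j}\rk{x_1\dotsm x_m}^j\) is of the form \eqref{E5} with \(b_j\defeq a_{j,\dotsc,j}\).

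The elimination of the off\nobreakdash-diagonal coefficients is the step I expect to be the main obstacle, and it is precisely where the \emph{product} structure of \textup{(ii)} must enter essentially: diagonal (hence commuting) configurations multiply eigenvalues coordinatewise and only produce Hadamard powers of rank\nobreakdash-one matrices, imposing nothing, while the case \(n=2\) yields only the multiplicative inequality of the introductory lemma, i.e.\ log\nobreakdash-convexity of \(f\), which is satisfied by many non\nobreakdash-product functions such as \(x_1+\dotsb+x_m\). I would therefore work with \(n\geq3\) and off\nobreakdash-diagonal blocks realized as matrix geometric means (or suitably rotated blocks), for which the product \(\Aab B_\ab\) stays \tpsd{} but has a spectrum with prescribed determinant and \emph{arbitrary} remaining symmetric functions; the point is that, in contrast to the determinant, a monomial symmetric function \(\prod_\gamma\lambda_\gamma^{p_\gamma}\) of the eigenvalues of a product carries a transposition symmetry and is a \emph{difference} of Gram matrices, hence indefinite once \(\mat{p_1,\dotsc,p_m}\) is non\nobreakdash-constant. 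To turn this into a proof I would isolate a single coefficient \(a_{q_1,\dotsc,q_m}\): first use the homogeneity under the scaling \(\Aab\mapsto s\Aab\) to separate the homogeneous parts, and then, within a fixed degree, borrow the Vandermonde linear\nobreakdash-independence argument of \rlem{L3} (as deployed in \rlem{L4}) to extract the \(q\)\nobreakdash-th monomial and exhibit the resulting indefinite matrix. The genuine difficulty, which I do not expect to be routine, is to carry out this isolation rigorously while keeping every test configuration a bona fide product of \tpsd{} block matrices with \tpsd{} entries, and to verify that the extracted \(q\)\nobreakdash-monomial matrix is actually non\nobreakdash-\tpsd{} for each non\nobreakdash-constant \(q\).
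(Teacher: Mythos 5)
Your argument for \ref{T6.i}\(\Rightarrow\)\ref{T6.ii} is correct and coincides with the paper's (the paper simply cites the \tpsd{ness} of \(\mat{\det\Aab}\) as well known where you derive it from Cauchy--Binet), and your reduction of \ref{T6.ii}\(\Rightarrow\)\ref{T6.i} to \rthm{T5} via \(B_\ab\defeq\Iu{m}\) is exactly the paper's first step, including the correct observation that the introductory lemma supplies the range condition \(f\colon\Rpm\to\Rp\) needed to invoke \rthm{T5}. But your proof stops there. The entire remaining content of the converse direction is to show that in the expansion \eqref{E3} only coefficients of monomials in \(x_1\dotsm x_m\) survive, and for this step you offer only a programme --- scaling to separate homogeneous parts, a Vandermonde extraction in the spirit of \rlem{L3}, off-diagonal blocks realized as matrix geometric means --- whose two decisive verifications (that every test configuration is an admissible pair of block matrices whose block products are \tpsd{}, and that the extracted monomial matrix is indefinite for every non-constant multi-index) you explicitly leave open. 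As it stands this is a gap, not a proof.

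Moreover, your strategic assessment of where the missing argument must come from points in the wrong direction. You assert that \(n=2\) ``yields only the multiplicative inequality of the introductory lemma'' and that one must therefore work with \(n\geq3\); the paper closes the gap entirely with \(n=2\). What you miss is that for \(n=2\) a \emph{non-commuting} choice of blocks lets the diagonal and off-diagonal products have essentially unrelated spectra: taking \(A_{11}^{(\epsilon)}=B_{22}^{(\epsilon)}=\diag(x_1,\dotsc,x_r,\epsilon,\dotsc,\epsilon)\), its reversal for \(A_{22}^{(\epsilon)}=B_{11}^{(\epsilon)}\), and off-diagonal blocks twisted by the anti-diagonal permutation matrix \(J\), one gets products \(A_{12}^{(\epsilon)}B_{12}^{(\epsilon)}\) with eigenvalues \(x_1^2,\dotsc,x_r^2,\epsilon^2,\dotsc,\epsilon^2\), while every eigenvalue of \(A_{11}^{(\epsilon)}B_{11}^{(\epsilon)}\) either carries a factor \(\epsilon\) or lies in a configuration on which \(f\) is already known (by induction on the minimal number \(r\) of non-vanishing arguments) to take the value \(f(0,\dotsc,0)\). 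Letting \(\epsilon\to0\) and using the continuity and monotonicity of \(f\) from the introductory lemma, the determinant \(f(A_{11}B_{11})f(A_{22}B_{22})-f(A_{12}B_{12})f(A_{21}B_{21})\) becomes negative unless \(f(x_1^2,\dotsc,x_r^2,0,\dotsc,0)=f(0,\dotsc,0)\), which is precisely the vanishing you need. If you want to complete your write-up, this \(2\times2\) configuration is the missing idea; no \(n\geq3\) construction and no geometric means are required.
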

\begin{proof}
 \begin{imp}{T6.i}{T6.ii}
  Assume that \(f\) has the form \eqref{E5}. Then \(f(\Aab B_\ab)=\sum_{j=0}^\infty b_j\rk{\det\Aab\det B_\ab}^j\) and the assertion~\ref{T6.ii} is a simple consequence of Schur's theorem and the well-known fact that \(\mat{\det\Aab}\in\psd{n}\) if \(\Aab\in\psdb{n}{m}\).
 \end{imp}

 \begin{imp}{T6.ii}{T6.i}
  Let \(B_\ab\defeq\Iu{m}\), \(\alpha,\beta\in\mn{1}{n}\). Since \(\mat{B_\ab}\in\psdb{n}{m}\) and \(\mat{f(\Aab B_\ab)}=\mat{f(\Aab)}\), from \rthm{T5} it follows that \(f\) has a power series expansion \eqref{E3}. Therefore, to prove~\ref{T6.i} it is enough to show that \(f\rk{x_1,\dotsc,x_{m-1},0}=f\rk{0,\dotsc,0}\) for all \(\mat{x_1,\dotsc,x_{m-1}}\in\Rpo{m-1}\). We can assume that \(f\) is not a constant function. Let \(r\in\mn{1}{m}\) be the smallest number such that there exists \(\mat{x_1,\dotsc,x_r}\in\Rpo{r}\) satisfying \(f(x_1^2,\dotsc,x_r^2,0,\dotsc,0)\neq f\rk{0,\dotsc,0}\). Since the coefficients of the power series expansion \eqref{E3} are \tnn{}, we get
\beql{E6}
 f(x_1^2,\dotsc,x_r^2,0,\dotsc,0)
 >f\rk{0,\dotsc,0}.
\eeq
  For \(\epsilon\in(0,\infty)\), define \(A_{11}^{(\epsilon)}=B_{22}^{(\epsilon)}\defeq\diag(x_1,\dotsc,x_r,\epsilon,\dotsc,\epsilon)\), \(A_{22}^{(\epsilon)}=B_{11}^{(\epsilon)}\defeq\diag(\epsilon,\dotsc,\epsilon,x_r,\dotsc,x_1)\), \(A_{12}^{(\epsilon)}=B_{21}^{(\epsilon)}\defeq\diag(x_1,\dotsc,x_r,\epsilon,\dotsc,\epsilon)J\), \(A_{21}^{(\epsilon)}=B_{12}^{(\epsilon)}\defeq\diag(\epsilon,\dotsc,\epsilon,x_r,\dotsc,x_1)\), where \(J\defeq\smat{0&&1\\&\iddots&\\1&&0}\). Then \(\mat{\Aab^{(\epsilon)}},\mat{B_\ab^{(\epsilon)}}\in\psdb{2}{m}\). If \(1\leq r\leq m/2\), we obtain \(f\rk{A_{11}^{(\epsilon)}B_{11}^{(\epsilon)}}=f\rk{A_{22}^{(\epsilon)}B_{22}^{(\epsilon)}}=f\rk{\epsilon x_1,\dotsc,\epsilon x_r,\epsilon x_r,\dotsc,\epsilon x_1,0,\dotsc,0}\) and \(f\rk{A_{12}^{(\epsilon)}B_{12}^{(\epsilon)}}=f\rk{A_{21}^{(\epsilon)}B_{21}^{(\epsilon)}}=f\rk{x_1^2,\dotsc,x_r^2,\epsilon^2,\dotsc,\epsilon^2}\). If \(\epsilon\) is small enough, from \eqref{E6} and the continuity of \(f\) it follows \(\det\mat{f\rk{\Aab^{(\epsilon)}B_\ab^{(\epsilon)}}}<0\) contradicting~\ref{T6.ii}. If \(m/2< r\leq m-1\), one obtains \(f\rk{A_{11}^{(\epsilon)}B_{11}^{(\epsilon)}}=f\rk{A_{22}^{(\epsilon)}B_{22}^{(\epsilon)}}=f\rk{x_{m-r+1}x_r,\dotsc,x_rx_{m-r+1},\epsilon x_1,\dotsc,\epsilon x_{m-r},\epsilon x_{m-r},\dotsc,\epsilon x_1}\) and \(f\rk{A_{12}^{(\epsilon)}B_{12}^{(\epsilon)}}=f\rk{A_{21}^{(\epsilon)}B_{21}^{(\epsilon)}}=f\rk{x_1^2,\dotsc,x_r^2,\epsilon^2,\dotsc,\epsilon^2}\). Since \(r\leq m-1\) yields \(2r-m\leq r-1\) and \(f\rk{y_1,\dotsc,y_{r-1},0,\dotsc,0}=f\rk{0,\dotsc,0}\) for all \(\mat{y_1,\dotsc,y_{r-1}}\in\Rpo{r-1}\), it follows \(f\rk{x_{m-r+1}x_r,\dotsc,x_rx_{m-r+1},0,\dotsc,0}=f\rk{0,\dotsc,0}\) and we again arrive at the contradiction \(\det\mat{f\rk{\Aab^{(\epsilon)}B_\ab^{(\epsilon)}}}<0\) if \(\epsilon\) is small enough.
 \end{imp}
\end{proof}

\paragraph{Acknowledgement}
 We are exceptionally grateful to Professor Fuzhen Zhang for pointing out Vasudeva's paper to us.

\bibliographystyle{abbrv}
\bibliography{neccond_arxiv}

\begin{thebibliography}{10}

\bibitem{BGKP}
A.~Belton, D.~Guillot, A.~Khare, and M.~Putinar.
\newblock Matrix positivity preservers in fixed dimension.~{I}.
\newblock {\tt arXiv:1504.07674v4 [math.CA]}, Apr. 2016.

\bibitem{MR747302}
C.~Berg, J.~P.~R. Christensen, and P.~Ressel.
\newblock {\em Harmonic analysis on semigroups}, volume 100 of {\em Graduate
  Texts in Mathematics}.
\newblock Springer-Verlag, New York, 1984.
\newblock Theory of positive definite and related functions.

\bibitem{MR502895}
J.~P.~R. Christensen and P.~Ressel.
\newblock Functions operating on positive definite matrices and a theorem of
  {S}choenberg.
\newblock {\em Trans. Amer. Math. Soc.}, 243:89--95, 1978.

\bibitem{MR0163181}
C.~S. Herz.
\newblock Fonctions op\'erant sur certains semi-groupes.
\newblock {\em C. R. Acad. Sci. Paris}, 255:2046--2048, 1962.

\bibitem{MR0152832}
C.~S. Herz.
\newblock Fonctions op\'erant sur les fonctions d\'efinies-positives.
\newblock {\em Ann. Inst. Fourier (Grenoble)}, 13:161--180, 1963.

\bibitem{MR2547900}
F.~Hiai.
\newblock Monotonicity for entrywise functions of matrices.
\newblock {\em Linear Algebra Appl.}, 431(8):1125--1146, 2009.

\bibitem{MR0264736}
R.~A. Horn.
\newblock The theory of infinitely divisible matrices and kernels.
\newblock {\em Trans. Amer. Math. Soc.}, 136:269--286, 1969.

\bibitem{MR2978290}
R.~A. Horn and C.~R. Johnson.
\newblock {\em Matrix analysis}.
\newblock Cambridge University Press, Cambridge, second edition, 2013.

\bibitem{MR3536943}
L.~Klotz and C.~M{\"a}dler.
\newblock Some functions preserving positive semidefiniteness of {$2\times 2$}
  block matrices.
\newblock {\em Linear Algebra Appl.}, 507:68--76, 2016.

\bibitem{MR0242855}
M.~Marcus and S.~M. Katz.
\newblock Matrices of {S}chur functions.
\newblock {\em Duke Math. J.}, 36:343--352, 1969.

\bibitem{MR3215179}
P.~Ressel.
\newblock Higher order monotonic functions of several variables.
\newblock {\em Positivity}, 18(2):257--285, 2014.

\bibitem{MR0109204}
W.~Rudin.
\newblock Positive definite sequences and absolutely monotonic functions.
\newblock {\em Duke Math. J}, 26:617--622, 1959.

\bibitem{MR0005922}
I.~J. Schoenberg.
\newblock Positive definite functions on spheres.
\newblock {\em Duke Math. J.}, 9:96--108, 1942.

\bibitem{MR537245}
H.~Vasudeva.
\newblock Positive definite matrices and absolutely monotonic functions.
\newblock {\em Indian J. Pure Appl. Math.}, 10(7):854--858, 1979.

\end{thebibliography}

\vfill\noindent
\begin{minipage}{0.5\textwidth}
 Universit\"at Leipzig\\
 Fakult\"at f\"ur Mathematik und Informatik\\
 PF~10~09~20\\
 D-04009~Leipzig
\end{minipage}
\begin{minipage}{0.49\textwidth}
 \begin{flushright}
  \texttt{
   klotz@math.uni-leipzig.de\\
   maedler@math.uni-leipzig.de
  } 
 \end{flushright}
\end{minipage}

\end{document}